\newtheorem{remark}[theorem]{Remark}
\begin{document}

\bibliographystyle{plain}
\title{$16$-vertex graphs with automorphism groups $A_{4}$ and $A_{5}$ from icosahedron}

\author{
Peteris\ Daugulis\thanks{Institute of Life Sciences and
Technologies, Daugavpils University, Daugavpils, LV-5400, Latvia
(peteris.daugulis@du.lv). } }

\pagestyle{myheadings} \markboth{P.\ Daugulis}{$16$-vertex graphs
with automorphism groups $A_{4}$ and $A_{5}$ from icosahedron}
\maketitle

\begin{abstract} The article deals with the problem of finding vertex-minimal graphs with a given automorphism group.
We exhibit two undirected $16$-vertex graphs having automorphism
groups $A_{4}$ and $A_{5}$. It improves the Babai's bound for
$A_{4}$ and the graphical regular representation bound for
$A_{5}$. The graphs are constructed using projectivisation of the
vertex-face graph of icosahedron.

\end{abstract}

\begin{keywords}
graph, icosahedron, hemi-icosahedron, automorphism group,
alternating group.
\end{keywords}
\begin{AMS}
05C25, 05E18, 05C35.
\end{AMS}

\section{Introduction}\

\subsection{Outline}

This article addresses a problem in graph representation theory of
finite groups - finding undirected graphs with a given
automorphism group and minimal number of vertices.

Denote by $\mu(G)$ the minimal number of vertices of undirected
graphs having automorphism group isomorphic to $G$,
$\mu(G)=\min\limits_{\Gamma: Aut(\Gamma)\simeq G}|V(\Gamma)|$. It
is known \cite{B1} that $\mu(G)\le 2|G|$, for any finite group $G$
which is not cyclic of order $3,4$ or $5$. See Babai \cite{B2} and
Cameron \cite{C}, for expositions of this area. There are groups
which admit a graphical regular representation, for such groups
$\mu(G)\le |G|$.  For some recent work see \cite{Da}, \cite{Da1},
\cite{G}.

For alternating groups $A_{n}$ $\mu(A_{n})$ is known for $n\ge
13$, see Liebeck \cite{L}. If $n\equiv 0\ or\ 1 (mod\ 4)$, then
$\mu(A_{n})=2^n-n-2$. Additionally, for $n\ge 5$ $A_{n}$ admits a
graphical regular representation, see \cite{W}. Thus for $A_{5}$
the best published estimate until now seemed to be $\mu(A_{5})\le
60$.

In this paper we exhibit graphs $\Gamma_{i}=(V,E_{i})$, $i\in
\{4,5\}$, such that $|V|=16$ and $Aut(\Gamma_{i})\simeq A_{i}$.
The graph $\Gamma_{5}$ (also denoted $\Pi_{I}$) is listed in
\cite{Co} together with order of its automorphism group. These
$\mu$ values are less than the Babai's bound for groups $A_{4}$
and $A_{5}$. For $A_{5}$ our graph has fewer vertices than the
graphical regular representation.
The new graphs are based on projectivisation of
vertex-face incidence relation of icosahedron.

\subsection{Notations}

We use standard notations for undirected graphs, see Diestel
\cite{D}. A bipartite graph $\Gamma$ with vertex partition sets
$V_{1}$ and $V_{2}$ is denoted as $\Gamma=(V_{1},V_{2},E)$.


Given a polyhedron $P$, we denote its vertex, edge and face sets
as $V=V(P)$, $E=E(P)$ and $F=F(P)$, respectively. We can think of
$P$ as the triple $(V,E,F)$.

If $S$ is a subset of $\mathbb{R}^{3}$ not containing the origin,
then its image under a projectivisation map to $P(\mathbb{R}^{3})$
is denoted by $\pi(S)$ or $[S]$, $[S]=\bigcup_{x\in S}[x]$.

\section{Main results}\

\subsection{Vertex-face graphs of polyhedra}\

\begin{definition} Let $P=(V,E,F)$ be a polyhedron. An undirected bipartite graph
$\Gamma_{P}=(V,F,I)$ is the
 \textbf{vertex-face graph of $P$} if $v\sim f$ iff $v\in V$, $f\in
 F$ and $v\in f$. In other words, $\Gamma_{P}$ corresponds to the vertex-face
incidence relation in $V\times F$.
\end{definition}

\begin{definition} Let $S=(V,E,F)$ be a centrally symmetric polyhedron. Let $S$ be positioned in
$\mathbb{R}^{3}$ so that its center is at $(0,0,0)$.
 We call the undirected
bipartite graph $\Pi_{S}=([V],[F],I_{p})$ \textbf{projective
vertex-face graph} if for any $v_{p}\in [V]$, $f_{p}\in [F]$ we
have $v_{p}\sim f_{p}$ iff $v\in f$ for some $v\in
\pi^{-1}(v_{p})$ and $f\in \pi^{-1}(f_{p})$.

\end{definition}

%

\subsection{Projective vertex-face graph of icosahedron and $A_{5}$}\

Let $I=(V,E,F)$ be a regular icosahedron. Denote by $Rot(I)\le
SO(3)$ the group of rotational symmetries of $I$ - rotations of
$\mathbb{R}^{3}$ preserving $V$ and $E$. It is known that
$Rot(I)\simeq A_{5}$. $\Pi_{I}$ is shown in Fig.1. We note that
$\Pi_{I}$ can be interpreted as the vertex-face graph of
hemi-icosahedron, see \cite{M2}.

\begin{center}
\epsfysize=45mm
    \epsfbox{fig_3_6.mps}

Fig.1.  - $\Pi_{I}$.
\end{center}

\begin{proposition}\label{1}
Let $I$ be regular icosahedron. Then $Aut(\Pi_{I})\simeq A_{5}$.
\end{proposition}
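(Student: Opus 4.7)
The plan is to establish both $A_5 \hookrightarrow Aut(\Pi_I)$ and $|Aut(\Pi_I)|\le 60$, which together force $Aut(\Pi_I)\simeq A_5$. The embedding comes from the natural action of $Rot(I)\simeq A_5$ on $V$ and $F$, which preserves vertex-face incidence and commutes with the antipodal map, hence descends to an action on $([V],[F])$ preserving $I_p$. Injectivity of the resulting homomorphism $A_5\to Aut(\Pi_I)$ reduces to checking that no nontrivial rotation sends every vertex of $I$ to itself or its antipode; I would verify this by a short case analysis on the three types of axes (through opposite vertices, opposite face centers, opposite edge midpoints), using that $-\mathrm{id}\notin Rot(I)$ because $A_5$ is centerless.

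For the upper bound, any $\phi\in Aut(\Pi_I)$ preserves the bipartition since $|[V]|=6\ne 10=|[F]|$, and is determined by its restriction to $[V]$ once one knows that $[f]\in[F]$ is recoverable from its $3$-element neighborhood in $[V]$. So $Aut(\Pi_I)$ embeds into the automorphism group of the hypergraph $\mathcal H$ on $[V]$ whose ten hyperedges are the triples $[f]$ for $f\in F$. I would next check that $\mathcal H$ is a $2$-$(6,3,2)$ design: the $1$-skeleton of the hemi-icosahedron is $K_6$ (each of the six antipodal vertex classes has its five icosahedral neighbors distributed one per remaining class), so projectively every pair of points lies on a projective edge, and each such edge lies in exactly two triples since each icosahedral edge lies in exactly two faces.

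The endgame is a link argument. The link of any $[v]\in[V]$ in $\mathcal H$ is a $2$-regular graph on $5$ vertices, hence necessarily a single $5$-cycle, since $5$ admits no partition into parts of size $\ge 3$ other than itself. Any $\phi\in Aut(\Pi_I)$ fixing $[v]$ thus acts on this $5$-cycle, embedding the stabilizer into $D_{10}$; the action is faithful because an element fixing $[v]$ together with its five neighbors fixes $[V]$ pointwise and hence acts trivially on $\Pi_I$. Combined with the transitivity on $[V]$ inherited from the $A_5$-action, this gives $|Aut(\Pi_I)|\le 6\cdot 10=60=|A_5|$. The steps I expect to require the most care are the faithfulness check for the original $A_5$-action (especially ruling out kernel elements among the $15$ order-$2$ rotations, whose axes pass through edge midpoints rather than vertices) and the auxiliary verification that the map sending $[f]\in[F]$ to its triple of neighbors in $[V]$ is injective, so that passing from $\Pi_I$ to $\mathcal H$ loses no information.
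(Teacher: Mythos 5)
Your proposal is correct and takes essentially the same route as the paper: an injective homomorphism $Rot(I)\simeq A_{5}\to Aut(\Pi_{I})$ (with the same injectivity criterion, that no nontrivial rotation preserves every antipodal vertex class), followed by the bound $|Aut(\Pi_{I})|\le 6\cdot 10=60$ from the orbit of a vertex of $[V]$ and its stabilizer. Your $2$-$(6,3,2)$-design and $5$-cycle-link formulation is simply a more rigorous rendering of the paper's statement that the subgraph $\sigma(v)$ around a fixed vertex can be mapped in at most $10$ ways and that the permutation of $[F]$ is determined by that of $[V]$.
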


\begin{proof} The stated fact can be checked using an appropriate
software, such as Magma, see \cite{B3}. Nevertheless we give a
proof based on the geometric construction.
 We prove that
$Rot(I)\simeq Aut(\Pi_{I})$ in two steps.

First we prove that there is a subgroup in $Aut(\Pi_{I})$
isomorphic to $Rot(I)$. We show that there is an injective group
morphism $f:Rot(I)\stackrel{f_{1}}{\rightarrow}
Aut(\Gamma_{I})\stackrel{f_{2}}{\rightarrow}Aut(\Pi_{I}).$
$f_{1}:Rot(I)\rightarrow Aut(\Gamma_{I})$ maps every $\rho\in
Rot(I)$ to $f_{1}(\rho)\in Aut(\Gamma_{I})$ which is the
permutation of $V\cup F$ induced by $\rho$:
$f_{1}(\rho)(x)=\rho(x)$ for any $x\in V\cup F$. Rotations of $I$
preserve the vertex-face incidence relation and $f_{1}$ is a group
morphism. $f_{2}:Aut(\Gamma_{I})\rightarrow Aut(\Pi_{I})$ maps
every $\varphi\in Aut(\Gamma_{I})$ to $\varphi_{P}\in
Aut(\Pi_{I})$ defined by the rule $\varphi_{P}([x])=[\varphi(x)]$
for any $x\in V(\Gamma_{I})$. Projectivization and composition
commute therefore $f_{2}$ is a group morphism. $f$ is injective
since there is no nontrivial rotation of $I$ sending each vertex
to another vertex in the same projective class.

In the second step we show that $|Aut(\Pi_{I})|\le 60$ by a
counting argument. Every vertex $v\in [V]$ is contained in a
subgraph $\sigma(v)$ shown in Fig.2.

\begin{center}
\epsfysize=30mm
    \epsfbox{fig_1.mps}

Fig.2.  - $\sigma(v)$.
\end{center}

All $\Pi_{I}$-vertices in $[V]$ have degree $5$, all
$\Pi_{I}$-vertices in $[F]$ have degree $3$. It follows that $[V]$
and $[F]$ both are unions of $Aut(\Pi_{I})$-orbits. $v$ can be
mapped by a $\Pi_{I}$-automorphism in at most $6$ possible ways.
After fixing the image of $v$ it follows again by
$Aut(\Pi_{I})$-invariance of $[V]$ that the subgraph $\sigma(v)$
can be mapped in at most $10$ ways. Any permutation of $[V]$ by an
automorphism determines a unique permutation of $[F]$. Thus
$|Aut(\Pi_{I})|\le 60$. We have shown that
$Aut(\Pi_{I})=f(Rot(I))\simeq A_{5}$.
\end{proof}

\begin{remark} A graph isomorphic to
$\Pi_{I}$ is listed without discussion of automorphism group in
\cite{Co} as one of connected edge-transitive bipartite graphs,
ET16.5.

\end{remark}

%

\subsection{A modification of the projective vertex-face graph of icosahedron and $A_{4}$}\

Since $A_{5}$ has subgroups isomorphic to $A_{4}$, we can try to
modify $\Pi_{I}$ so that the automorphism group of the modified
graph is isomorphic to $A_{4}$. We find generators for a subgroup
$H\le Rot(I)$, such that $H\simeq A_{4}$, and add $3$ extra edges
to $\Pi_{I}$ which are permuted only by elements of $H$.

Denote by $I_{1}$ the polyhedral ($1$-skeleton) graph of $I$,
$Aut(I_{1})\simeq Sym(I) \simeq A_{5}\times \mathbb{Z}_{2}$.
 An isomorphism
$Sym(I)\rightarrow Aut(I_{1})$ takes a symmetry $f$ to
$f|_{I_{1}}$.


\begin{proposition} Choose a $6$-subset of vertices $W=\{O,A,B,C,D,E\}\subseteq V(I)$
such that $I_{1}[W]$ is isomorphic to the $5$-wheel, see Fig.3.

\begin{center}
\epsfysize=25mm
    \epsfbox{fig_2.mps}

Fig.3.  - $I_{1}[W]$.
\end{center}

Define an undirected graph $\Xi_{I}=([V]\cup [F],I_{p}\cup J)$ by
adding $3$ edges to $\Pi_{I}$: $J=\{[A]\sim [C],[B]\sim
[O],[D]\sim [E]\}$, see Fig.4. Then $Aut(\Xi_{I})\simeq A_{4}$.

\begin{center}
\epsfysize=25mm
    \epsfbox{fig_4_4.mps}

Fig.4.  - extra edges.
\end{center}

\end{proposition}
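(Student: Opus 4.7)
The plan is to sandwich $Aut(\Xi_{I})$ between $A_{4}$ and $A_{5}$ and then invoke the fact that $A_{4}$ is a maximal subgroup of $A_{5}$ to conclude.

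For the upper bound, I would mimic the bipartition argument used in Proposition~\ref{1}. After adjoining $J$, every $[V]$-vertex has degree $6$ (its five original $\Pi_{I}$-edges together with its unique partner in the perfect matching $J$ on $[V]$), while every $[F]$-vertex still has degree $3$. Thus every $\Xi_{I}$-automorphism preserves the bipartition $[V]\sqcup[F]$; since $J$-edges are intra-$[V]$ and $I_{p}$-edges are bipartite, the two edge sets are permuted separately. It follows that $Aut(\Xi_{I})\leq Aut(\Pi_{I})\simeq A_{5}$.

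For the lower bound, I would identify the three pairs of $J$ with a mutually perpendicular triple of inscribed golden rectangles in $I$. Realize $I$ with vertices at cyclic permutations of $(0,\pm 1,\pm\phi)$ (with $\phi$ the golden ratio), take $O=(0,1,\phi)$, and label its neighbors in pentagonal cyclic order as $A=(1,\phi,0)$, $B=(\phi,0,1)$, $C=(0,-1,\phi)$, $D=(-\phi,0,1)$, $E=(-1,\phi,0)$. A short calculation using $\phi^{2}=\phi+1$ shows that the three $4$-vertex subsets $\{A,-A,C,-C\}$, $\{B,-B,O,-O\}$, $\{D,-D,E,-E\}$ of $V(I)$ are inscribed golden rectangles whose normal vectors are pairwise orthogonal. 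Let $H\leq Rot(I)$ be the setwise stabilizer of this unordered triple of rectangles: $H$ is generated by the three $180^{\circ}$ rotations about the axes normal to each rectangle (which form a Klein-four subgroup) together with a $3$-fold rotation cyclically permuting the rectangles, and hence $H\simeq A_{4}$. By construction each element of $H$ permutes the three projective pairs $\{[A],[C]\}$, $\{[B],[O]\}$, $\{[D],[E]\}$ among themselves, so the image $f(H)$ of $H$ under the embedding $f$ of Proposition~\ref{1} lies in $Aut(\Xi_{I})$.

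Combining both bounds, $A_{4}\simeq f(H)\leq Aut(\Xi_{I})\leq A_{5}$. Since $A_{4}$ is maximal in $A_{5}$, either $Aut(\Xi_{I})\simeq A_{4}$ or $Aut(\Xi_{I})=A_{5}$. To exclude the second case it suffices to exhibit one rotation of $I$ whose projectivisation fails to preserve $J$: the $3$-fold rotation $(x,y,z)\mapsto(y,z,x)$ about the face $\{O,A,B\}$ cycles $[O]\to[A]\to[B]\to[O]$ and $[C]\to[E]\to[D]\to[C]$, so it sends $\{[A],[C]\}$ to $\{[B],[E]\}\notin J$. The main technical obstacle is the geometric identification in the middle step: verifying that the three $J$-pairs form a perpendicular triple of golden rectangles is precisely what pins $Aut(\Xi_{I})$ down to the tetrahedral $A_{4}$-subgroup of $Rot(I)$. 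The bipartition upper bound and the single-rotation check ruling out $A_{5}$ are routine once that identification is in place.
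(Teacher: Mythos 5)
Your proof is correct, and it fills in the two places where the paper's own argument is terse. The skeleton is the same in both cases --- sandwich $Aut(\Xi_{I})$ between an $A_{4}$-subgroup of $Rot(I)$ and $Aut(\Pi_{I})\simeq A_{5}$ --- but the realizations differ. The paper produces the lower bound by writing down two explicit generators ($r_{1}$, a $\tfrac{2\pi}{3}$ face rotation, and $r_{2}$, a $\pi$ edge rotation), asserting $\langle r_{1},r_{2}\rangle\simeq A_{4}$, and checking by inspection that they permute the three new edges; you instead identify the three $J$-pairs with a mutually perpendicular triple of inscribed golden rectangles and take $H$ to be its setwise stabilizer, which explains conceptually both why $H\simeq A_{4}$ (point stabilizer of the standard $A_{5}$-action on the five perpendicular triples) and why $H$ preserves $J$. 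Your coordinate computation checks out: with $A=(1,\phi,0)$, $C=(0,-1,\phi)$, etc., the three quadruples are golden rectangles with pairwise orthogonal normals. For the upper bound, the paper silently assumes $Aut(\Xi_{I})\le Aut(\Pi_{I})$ and then asserts that ``any additional rotation does not permute these three new edges,'' which as stated is an unverified claim about all $48$ rotations outside $H$; your version makes the containment $Aut(\Xi_{I})\le Aut(\Pi_{I})$ explicit via the degree/bipartition argument and replaces the exhaustive claim by the maximality of $A_{4}$ in $A_{5}$ together with a single explicitly exhibited rotation $(x,y,z)\mapsto(y,z,x)$ that sends $\{[A],[C]\}$ to $\{[B],[E]\}\notin J$. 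This is a genuine improvement in rigor at the cost of fixing coordinates; the only point worth adding is a sentence noting that $Sym(I)$ acts transitively on labelled $5$-wheel configurations, so your concrete choice of $W$ and its cyclic labelling is without loss of generality.
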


\begin{proof}
Consider the subgroup $H=\langle r_{1},r_{2}\rangle\le Rot(I)$
generated by two rotations:
 $r_{1}$ - rotation by $\frac{2\pi}{3}$ radians around the
line passing through the center of the face $OCD$ and the center
of $I$, $r_{2}$ - rotation by $\pi$ radians around the line
passing through the center of the edge $OB$ and the center of $I$.

It can be checked that $H\simeq A_{4}$. Note that the vertices
$O,A,B,C,D,E$ in Fig.3 represent the $6$ projective classes of
$V$.

We have to show that $Aut(\Xi_{I})\simeq H$. First we show that
$H\le Aut(\Xi_{I})$. $\Xi_{I}$ differs from $\Pi_{I}$ by $3$ extra
edges. It suffices to note by direct inspection that $r_{1}$
permutes these extra edges and $r_{2}$ fixes each of them. To show
that $Aut(\Xi_{I})\le H$ we observe that any additional rotation
$r'$ does not permute these three new edges and thus $r'\not\in
Aut(\Xi_{I})$.
\end{proof}

%
%
%

%
%

\begin{remark} If $D$ is dodecahedron then $\Pi_{D}\simeq
\Pi_{I}\simeq A_{5}$.

\end{remark}



\section*{Acknowledgements} We used $Magma$, see Bosma et al. \cite{B3}, and $Nauty$,
available at $http://cs.anu.edu.au/~bdm/data/$, see McKay and
Piperno \cite{M}. The author thanks Valentina Beinarovica for her
assistance.


\end{document}